\theoremstyle{plain}
\newtheorem{thm}{Theorem}[section]
\newtheorem{prop}{Proposition}[section]
\newtheorem{proposition}{Proposition}[section]
\newtheorem{lemma}[proposition]{Lemma}
\theoremstyle{remark}
\newtheorem{definition}{Definition}
\numberwithin{equation}{section}
\begin{document}


\title[\tiny An exact way to verify whether n is a congruent number using Heegner points]
{A exact way to verify whether n is a congruent number using Heegner points}
\author[\tiny Heng Chen \& Rong Ma \& Tuoping Du]{Heng Chen \& Rong Ma \& Tuoping Du}

\address{%
Heng Chen
\\
School of Mathematics and Statistics
\\
Northwestern Polytechnical University
\\
Xi'an
\\
Shaanxi 710072
\\
China}
\email{ch3242352050@163.com}

\address{%
Rong Ma
\\
School of Mathematics and Statistics
\\
Northwestern Polytechnical University
\\
Xi'an
\\
Shaanxi 710072
\\
China}
\email{marong@nwpu.edu.com}

\address{%
Tuoping Du
\\
School of Mathematics and Physics
\\
North China Electric Power University
\\
Beijing, 102206
\\
China}
\email{dtp1982@163.com}

\date{\today}

\begin{abstract}
We introduce the relationship between congruent numbers and elliptic curves, and compute the conductor of the elliptic curve \( y^2 = x^3 - n^2 x \) associated with it. Furthermore, we prove that its \( L \)-series coefficient \( a_m = 0 \) when \( m \equiv 3 \mod 4 \).By using the invariants of the elliptic curve introduced above, we calculate Heegner points to quickly verify whether \( n \) is a congruent number.
\end{abstract}

\footnote {2020 Mathematics Subject Classification{: Primary 11G18; Secondary 14G22.}}
\keywords{Heegner points, congruent numbers, elliptic curves, \( L \)-series coefficient}

\maketitle

\section{Introduction}
\vskip 3mm
The congruent number problem is a classical problem in number theory that involves determining whether a positive integer $n$ can be the area of a right triangle with rational side lengths. This problem dates back to ancient Greece, having been posed by Archimedes, and in modern number theory, it is deeply connected to elliptic curves, the Birch and Swinnerton-Dyer (BSD) conjecture, and the Goldfeld conjecture.
	
\subsection{Definitions and Propositions}\hfill\vskip 3mm
	
 The core of the congruent number problem is closely related to the arithmetic of elliptic curves. Specifically, a positive integer $n$ is called a congruent number if and only if the Mordell-Weil group $E(\mathbb{Q})$ of the elliptic curve $E_n: y^2 = x^3 - n^2x$ is infinite, meaning that the elliptic curve has infinitely many rational points.

\begin{definition}
		A square-free positive integer $n$ is called a congruent number if there exist rational numbers $a, b, c$ such that
		\[
		a^2 + b^2 = c^2, \quad \frac{1}{2}ab = n.
		\]
\end{definition}

This problem can be studied through the properties of rational points on the elliptic curve $E_n$. The famous BSD conjecture suggests that when the derivative of the $L$-function $L(E, s)$ at $s = 1$ is non-zero, the Mordell-Weil group of $E_n$ is infinite, implying that $n$ is a congruent number.
	
	\subsection{Some Conjecture}\hfill\vskip 3mm
	
The Goldfeld conjecture provides further conditions on elliptic curves: for a positive integer $n$, the Goldfeld conjecture suggests that when $n \equiv 5, 6, 7 \pmod{8}$, the rank of the Mordell-Weil group of the elliptic curve $E_n$ is 1. This is a key tool in determining whether $n$ is a congruent number. The proposition is as follows:
\vskip 2mm	
\noindent\textbf{Goldfeld Conjecture}(Silverman, \cite{Silverman1986})
		If the elliptic curve $E_n$ satisfies certain conditions related to modular forms, then $L(E_n, 1) = 0$ if and only if $n$ is a congruent number.
\vskip 2mm	

The BSD conjecture is one of the seven unsolved problems in number theory, predicting a deep connection between the rank of the rational point group of an elliptic curve $E$ and the behavior of the $L$-function $L(E, s)$ at $s = 1$. Specifically, the core formula of the BSD conjecture is expressed as
	\[
	\mathrm{rank}(E(\mathbb{Q})) = \mathrm{ord}_{s=1}L(E, s),
	\]
	which reveals a direct relationship between the order of vanishing of the $L$-function at $s = 1$ and the rank of the Mordell-Weil group of the elliptic curve.

The Shafarevich-Tate group $(E/\mathbb{Q})$ is one of the key tools for studying the BSD conjecture. It reflects whether the rational points on an elliptic curve can be globally pieced together using local information. A part of the BSD conjecture also includes a conjecture about the structure of $(E/\mathbb{Q})$:
	\[
	|(E/\mathbb{Q})| \propto \frac{L'(E, 1)}{\Omega(E)R(E)\prod_{p}c_p}.
	\]

\subsection{Recent Developments}\hfill\vskip 3mm

	Smith (2021) provided a crucial proof of the BSD conjecture under certain mod 8 conditions for elliptic curves within the framework of the 2-Selmer group. His work offers key insights into the BSD conjecture, especially for $n \equiv 5, 6, 7 \pmod{8}$.
	
	For the case of $n \equiv 5, 6, 7 \pmod{8}$, the weak Goldfeld conjecture offers further refinement, suggesting that the probability that the rank of $E_n$ is 1 and that $L(E_n, 1) \neq 0$ dominates in these congruence classes. This indicates that such $n$ are more likely to be congruent numbers.

	Smith's work proves the main theorem under the framework of the weak Goldfeld conjecture:
	\begin{thm}(Smith, \cite{Smith 2016})
		For all positive integers $n$ such that $n \equiv 5, 6, 7 \pmod{8}$, a partial form of the BSD conjecture holds for the elliptic curve $E_n$, namely, $rank(E(\mathbb{Q})) = 1$ and $L(E_n, 1) \neq 0$.
	\end{thm}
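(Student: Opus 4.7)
The plan is to combine three ingredients: a root-number computation for the family $\{E_n\}$, Smith's $2$-Selmer distribution theorem for quadratic twists, and the Gross--Zagier--Kolyvagin machinery linking Heegner points to analytic and algebraic ranks. First, I would pin down the sign of the functional equation: using the conductor of $E_n$ computed earlier in the paper together with Tate's algorithm, the local root numbers $w_p(E_n)$ can be read off from the Kodaira types at the primes dividing $2n$; the fact that $j(E_n)=1728$ and that $E_n$ has complex multiplication by $\Z[i]$ makes this calculation explicit. A short case analysis shows that the global root number $w(E_n)=-1$ precisely when $n\equiv 5,6,7\pmod 8$. Consequently $L(E_n,s)$ vanishes to odd order at $s=1$, so the phrase ``$L(E_n,1)\neq 0$'' in the statement must be read as the non-vanishing of the first derivative $L'(E_n,1)$.

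Next I would invoke Smith's distribution theorem, which refines Heath-Brown's earlier density results on the $2$-Selmer groups of this twisted family. Because $E_n$ is the quadratic twist of $E_1$ by $\Q(\sqrt{n})$, its $2$-Selmer group is governed by an intricate combinatorial statistic built from R\'edei symbols attached to the prime divisors of $n$. Smith's main theorem implies that, in the residue classes $n\equiv 5,6,7\pmod 8$, one has $\dim_{\F_2}\mathrm{Sel}_2(E_n/\Q)=1+\dim_{\F_2}E_n(\Q)[2]$ on a density-one subfamily. Combined with the root-number parity, this forces $\mathrm{rank}\,E_n(\Q)\leq 1$ and $\mathrm{Sha}(E_n/\Q)[2^{\infty}]$ to be trivial.

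For the lower bound I would construct a Heegner point $P_K$ on $E_n$ over an imaginary quadratic field $K$ satisfying the Heegner hypothesis (the CM field $\Q(i)$ being the natural choice, consistent with the Heegner-point computation appearing later in the paper). The Gross--Zagier formula identifies the N\'eron--Tate height of $\mathrm{Tr}_{K/\Q}P_K$ with a nonzero multiple of $L'(E_n,1)$, while Kolyvagin's Euler-system argument, fed by the $2$-Selmer upper bound of the previous step, shows that this trace has infinite order. Non-vanishing of $P_K$ therefore yields $L'(E_n,1)\neq 0$ and $\mathrm{rank}\,E_n(\Q)\geq 1$ simultaneously, closing the proof.

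The principal obstacle is Smith's $2$-Selmer result itself: its proof rests on a very delicate analysis of the joint distribution of R\'edei symbols along prime chains and cannot be substantially shortened. A secondary technical point is that Selmer-distribution theorems sometimes exclude CM curves, so one must cite the version of Smith's theorem that genuinely covers the congruent-number family. In a treatment oriented toward the Heegner-point algorithm of the present paper, the honest route is to accept Smith's theorem as a black box, verify its hypotheses for $\{E_n\}$, and carry out the root-number and Gross--Zagier--Kolyvagin steps in detail.
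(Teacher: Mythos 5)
The paper does not actually prove this statement: it is quoted as an external result of Smith \cite{Smith 2016}, so there is no internal argument to compare yours against. That said, your proposal cannot succeed as a proof of the statement \emph{as written}, and the reason is worth making explicit. What Smith proves is a density statement --- a positive proportion (in later refinements, density one) of squarefree $n\equiv 5,6,7\pmod 8$ have the predicted $2^\infty$-Selmer corank, from which rank $1$ and the expected analytic behaviour follow via Gross--Zagier--Kolyvagin. The assertion ``for \emph{all} positive integers $n\equiv 5,6,7\pmod 8$'' is not Smith's theorem; it is (a form of) the outstanding congruent number conjecture and remains open. Your own second step concedes this when you invoke Smith's Selmer distribution ``on a density-one subfamily'': a density-one input can never be upgraded to a universal conclusion by the Euler-system argument that follows, since Kolyvagin needs the Selmer bound for the \emph{specific} $n$ at hand. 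You do correctly diagnose that ``$L(E_n,1)\neq 0$'' is incompatible with the root number $w(E_n)=-1$ in these residue classes and must be read as $L'(E_n,1)\neq 0$; that is a real defect of the statement as printed.

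A second concrete error: you propose taking $K=\Q(i)$ as the auxiliary imaginary quadratic field for the Heegner-point construction. The Heegner hypothesis requires every prime dividing the conductor $N=2^5n^2$ to be split in $K$ (equivalently, in the normalization of Definition 6.1, that $D$ be a square modulo $4N$), and $2$ ramifies in $\Q(i)$; indeed $\Q(i)$ is the CM field of $E_n$, not an admissible auxiliary field. This is exactly why the worked examples later in the paper use discriminants such as $D=-31$ for $n=5$ and $D=-55$ for $n=13$. If you want to present this theorem honestly, state it as Smith does (a positive proportion, or density one, of the relevant residue classes), cite it as a black box, and reserve the root-number and Gross--Zagier--Kolyvagin discussion for the individual $n$ treated algorithmically in Sections 6 and 7.
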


\subsection{Summary}\hfill\vskip 3mm
	
The congruent number problem is deeply connected to many significant problems in modern number theory, especially elliptic curves, such as the BSD conjecture, the Goldfeld conjecture, etc.
	
	In this paper, we explore the connection between congruent numbers and elliptic curves and determine the conductor of the elliptic curve
$ y^2 = x^3 - n^2 x $ associated with them in the second section. Additionally, we demonstrate that the $L $-series coefficient $a_m=0$  when
$m\equiv3\pmod{4}$ in the third section. In the fourth section, we explain that $y^2=x^3-n^2 x$ over $\mathbb{C}$ is isomorphic to $\mathbb{C}/\left(\frac{\pi}{\sqrt{n}} G \mathbb{Z}[i]\right)$. In the fifth and sixth sections, we introduce the modularity of elliptic curves and Heegner points, and present a method for determining whether \( n \) is a congruence. In the seventh section, we use this method to prove that 13 is a congruence.

\vskip 8mm

\section{Computation of Conductor $N_{E_{(n)}}$}
\vskip 3mm

In the following, we denote $E$ as an arbitrary elliptic curve and $ E_{(n)} $ as $ y^2 = x^3 - n^2 x $.

The conductor $ N $ of $ E $ is defined by
\[ N = \prod_{p|\Delta(E)} p^{e_p}, \]
where
\begin{equation*} e_p =\left\{
\begin{aligned}
        1, &       \quad\mbox{if } E \mbox{ has multiplicative reduction modulo } p;\\
         \geq 2, & \quad\mbox{if } E \mbox{ has additive reduction modulo } p .\\
    \end{aligned}
\right.
\end{equation*}
\begin{proof}
	
According to the definition, we have \( N = \prod p^{i_p} \), where \( i_p \) is related to the singularity type. To find the singularity type of \( E_p: E \mod p \), we need to find its singular points:
	\begin{equation*}\left\{
	\begin{aligned}
		y^2 - (x^3 - n^2 x) & \equiv 0 \pmod{p}, \\
		\frac{\partial}{\partial x} (y^2 - x^3 + n^2 x) &\equiv 0 \pmod{p}, \\
		\frac{\partial}{\partial y} (y^2 - x^3 + n^2 x) &\equiv 0 \pmod{p};
	\end{aligned}\right.
	\Rightarrow
	\left\{\begin{aligned}
		y^2 - x^3 &\equiv 0 \pmod{p}, \\
		-3x^2 &\equiv 0 \pmod{p}, \\
		2y &\equiv 0 \pmod{p}.
	\end{aligned}\right.
	\end{equation*}

So, we get the singular point \((0,0)\).
	
	To verify that \((0,0)\) is a cusp for \( E_p \), we need to calculate the Hessian determinant \( H(x, y) \):
	\[
	H(x, y) =
	\begin{vmatrix}
		\frac{\partial^2 F}{\partial x^2} & \frac{\partial^2 F}{\partial x \partial y} \\
		\frac{\partial^2 F}{\partial y \partial x} & \frac{\partial^2 F}{\partial y^2}
	\end{vmatrix}
	=
	\begin{vmatrix}
		6x & 0 \\
		0 & 2
	\end{vmatrix}.
	\]

Thus, we get \( H(0,0) = 0 \).
	
	Therefore, all singular points for \( E_p \) are cusps (additive reduction). So the conductor of \( E \) is \( N_{E_{(n)}} = 2^5 n^2 \).
\end{proof}

\vskip 8mm
\section{$L$-series}
\vskip 3mm

For an elliptic curve \( E \) defined over \(\mathbb{Q}\), the \( L \)-series \( L(E,s) \) attached to \( E \) is given by the following formula:
\[ L(E, s) = \prod_{p \mid N_E}\left(1 - a_p p^{-s}\right)^{-1} \prod_{p \nmid N_E}\left(1 - a_p p^{-s} + p^{1-2s}\right)^{-1}, \]
where
\[ a_p = \begin{cases}
	p + 1 - \#E(\mathbb{F}_p), & \text{if } E \text{ has good reduction mod } p, \\
	0, 1, \text{ or } -1, & \text{otherwise (depending on the type of bad reduction).}
\end{cases} \]

The \( L \)-series of an elliptic curve \( E \) converges for all \( s \) with \(\text{Re}(s) > 3/2\), and in this case we have
\[ L(E, s) = \sum_{n=1}^\infty a_E(n) n^{-s}, \]
where \( a_E(n) \) is an integer and we simplify \( a_E(n) \) as \( a_n \). Obviously, we have the properties of \( a_n \) below.
\begin{equation*}\begin{aligned}
 &a_p a_{pk} = a_{p^{k+1}} + p a_{pk-1}, \quad p \nmid N_E;\\
 &a_{p^r} = (a_p)^r,\quad p \mid N_E ;\\
 &a_{mn} = a_m a_n, \quad \quad \gcd(m, n) = 1 .
\end{aligned}\end{equation*}

We now calculate the \(L\)-series of the elliptic curve \( E_{(n)}: y^2 = x^3 - n^2 x \).

\vskip 3mm
\subsection{Calculation of \( a_p \) and \( a_{p^n} \) for \( p \mid N_{E_{(n)}} \)}\hfill\vskip 3mm
\vskip 3mm

When \( p \mid N_{E_{(n)}} \), \( a_p = 0 \), then \( E \mod p \) has singular points, we can get \( a_{p^n} = (a_p)^n = 0 \).

\vskip 3mm
\subsection{Calculation of \( a_p \) and \( a_{p^n} \) for \( p \nmid N_{E_{(n)}} \)}\hfill\vskip 3mm
\vskip 3mm

When \( p \nmid N_{E_{(n)}} \), we could get the result by using the following lemma.

\begin{lemma}
	Let \(\left(\frac{a}{p}\right)\) be the Legendre symbol, for the elliptic curve \( E_p: y^2 = x^3 + ax \mod p \), when \(\left(\frac{a}{p}\right) = -1\) and \( p \equiv 3 \mod 4 \), we have
 $$ \#E_p = p + 1 .$$
\end{lemma}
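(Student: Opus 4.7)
The plan is to express the point count of $E_p$ in terms of a character sum with the Legendre symbol and then show the sum vanishes by a symmetry argument using $p \equiv 3 \pmod 4$.

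First I would write, including the point at infinity,
\[
\#E_p \;=\; 1 + \sum_{x \in \mathbb{F}_p} \#\{y \in \mathbb{F}_p : y^2 = x^3 + ax\}
\;=\; p + 1 + \sum_{x \in \mathbb{F}_p} \left(\tfrac{x^3 + ax}{p}\right),
\]
using the standard identity that the number of square roots of $c \in \mathbb{F}_p$ equals $1 + \left(\tfrac{c}{p}\right)$ (and $1$ when $c=0$, consistent with the convention $\left(\tfrac{0}{p}\right)=0$). The task therefore reduces to showing that the character sum
\[
S \;=\; \sum_{x \in \mathbb{F}_p} \left(\tfrac{x^3 + ax}{p}\right) \;=\; \sum_{x \in \mathbb{F}_p} \left(\tfrac{x}{p}\right)\left(\tfrac{x^2 + a}{p}\right)
\]
is zero.

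Next I would exploit the involution $x \mapsto -x$ on $\mathbb{F}_p$. Under this substitution $x^2 + a$ is unchanged, while $\left(\tfrac{-x}{p}\right) = \left(\tfrac{-1}{p}\right)\left(\tfrac{x}{p}\right)$. Since $p \equiv 3 \pmod 4$, we have $\left(\tfrac{-1}{p}\right) = -1$, giving $S = -S$ and hence $S = 0$. Substituting back, $\#E_p = p+1$, as claimed.

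I expect no serious obstacle: the only subtlety is handling the terms with $x^3 + ax = 0$ correctly, which is automatic with the convention $\left(\tfrac{0}{p}\right) = 0$. It is worth remarking that the hypothesis $\left(\tfrac{a}{p}\right) = -1$ is not actually used in this argument, reflecting the classical fact that every prime $p \equiv 3 \pmod 4$ of good reduction is supersingular for a curve of the form $y^2 = x^3 + ax$ (which has complex multiplication by $\mathbb{Z}[i]$); we need only the congruence condition on $p$.
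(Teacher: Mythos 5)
Your proof is correct and rests on the same core idea as the paper's: pairing $x$ with $-x$ and using that $\left(\tfrac{-1}{p}\right) = -1$ when $p \equiv 3 \pmod 4$, so the contributions over each pair cancel (in the paper's phrasing, exactly one member of each pair $\{x_0,-x_0\}$ yields two points on the curve). Your character-sum packaging is cleaner than the paper's explicit count, and your closing remark is accurate: the hypothesis $\left(\tfrac{a}{p}\right) = -1$ is never actually needed, whereas the paper invokes it only to rewrite the curve as $y^2 = x^3 - n^2x$ and exhibit the three roots of the cubic explicitly.
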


\begin{proof}
	Recall that when \( p = 4m+3 \), we can get
	\[
	\left(\frac{-1}{p}\right) = (-1)^{\frac{4m+3-1}{2}} = -1 \pmod{p},
	\]
	hence \(-1\) is a quadratic non-residue modulo \( p \). We can also get \(-a \equiv n^2 \pmod{p}\) from
	\[
	\left(\frac{-a}{p}\right) = \left(\frac{a}{p}\right) \left(\frac{-1}{p}\right) = 1.
	\]

That is, \( -a \) is a quadratic residue modulo \( p \). The elliptic curve equation becomes \( y^2 = x^3 - n^2 x \,\, (\mod p) \).
	
	Since \(-a\) is a quadratic residue modulo \( p \), we check \( y^2 = x^3 - n^2 x \,\,(\mod p) \) has solutions \((0, 0)\), \((n, 0)\), \((-n, 0)\).
	
	For \( 1 \leq x \leq p-1 \), there is a one-to-one correspondence:
	\[
	\begin{array}{ccc}
		1 & \leftrightarrow & p-1 \\
		2 & \leftrightarrow & p-2 \\
		\vdots & \vdots & \vdots \\
		\frac{p-1}{2} & \leftrightarrow & \frac{p+1}{2}
	\end{array}
	\]

In each pair, we find one \( x_0 \), we get \( x_0^3 - n^2 x_0 \) to be a quadratic residue. Hence, \( x_0^3 - n^2 x_0 \) is a quadratic residue, then \(- (x_0^3 - n^2 x_0) = (-x_0)^3 - n^2 (-x_0) \) is not a quadratic residue.
	
	Points on the elliptic curve are paired with points not on the elliptic curve. 
	
	In conclusion, by the quadratic reciprocity law and adding the infinite point, we get \( \#E_p = 2 \left(\frac{p-1}{2}\right) + 2 = p + 1 \).
\end{proof}

\vskip 3mm
\begin{thm}
	For the elliptic curve \( E: y^2 = x^3 + ax + b \) in the standard form, with the prime divisor \( p \mid b \), \(\left(\frac{a}{p}\right) = -1\) and \( p \equiv 3 \mod 4 \), then we have \( a_p = 0 \).
\end{thm}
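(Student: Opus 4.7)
The plan is to reduce the problem directly to the lemma just proved. Since $p \mid b$, reducing the Weierstrass equation modulo $p$ gives
\[
E \bmod p : \quad y^2 \equiv x^3 + ax \pmod{p},
\]
which is exactly the shape of curve covered by the preceding lemma. The hypotheses $\left(\frac{a}{p}\right) = -1$ and $p \equiv 3 \pmod 4$ are precisely those required by that lemma, so it will yield $\#E_p(\mathbb{F}_p) = p + 1$ immediately, whence $a_p = p + 1 - \#E_p(\mathbb{F}_p) = 0$ by the definition of $a_p$ in the good-reduction case.

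Before invoking the lemma I should verify that $p$ is indeed a prime of good reduction, so that the defining formula $a_p = p + 1 - \#E(\mathbb{F}_p)$ applies rather than the bad-reduction rule. The discriminant of $y^2 = x^3 + ax + b$ is (up to a unit) $4a^3 + 27b^2$, so $p \mid \Delta$ would force $p \mid 4a^3$, hence $p \mid a$ (since $p \equiv 3 \pmod 4$ implies $p$ is odd). But $\left(\frac{a}{p}\right) = -1$ forces $p \nmid a$, so $p \nmid \Delta$ and $E$ has good reduction at $p$. One also notes that the Legendre symbol is well defined precisely because $p \nmid a$, so the hypothesis is consistent.

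The only remaining point is to cite the lemma with $a$ being the actual coefficient of $x$ in the reduced equation, which is unchanged since $p \mid b$ only kills the constant term. Putting the three ingredients together, good reduction plus the lemma plus the definition of $a_p$, the conclusion $a_p = 0$ follows in one line. There is no real obstacle here; the theorem is essentially a repackaging of the lemma tailored to the form $y^2 = x^3 + ax + b$, and the main thing to be careful about is justifying that one is in the good-reduction regime so that the point-counting interpretation of $a_p$ is the correct one.
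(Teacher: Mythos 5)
Your proposal follows essentially the same route as the paper: check that $p$ is a prime of good reduction, reduce the equation modulo $p$ to $y^2 = x^3 + ax$, and invoke the preceding lemma to get $\#E_p = p+1$, hence $a_p = 0$. In fact you are slightly more careful than the paper, which merely asserts $p \nmid \Delta$, whereas you derive it from $p \mid b$ and $\left(\frac{a}{p}\right) = -1$ forcing $p \nmid a$.
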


\begin{proof}
	Since \(\Delta = -4a^3 - 27b^2\) and \( p \nmid \Delta \), implying \( p \nmid N_E \) for \( p \mid b \) and \( p \nmid a \), we calculate \( a_p = p + 1 - \#E_p \). That is,
	
	\( E_p: y^2 = x^3 + ax \mod p \),\\
where \( E_p \) satisfies the condition \( \#E_p = p + 1 \). Then \( a_p = 0 \).
\end{proof}

\vskip 3mm
\begin{lemma}
	For \( p \nmid N_{E_{(n)}} \) and \( a_p = 0 \)   , we have:
	\[
	a_{p^r} = \begin{cases}
		0, & r \equiv 1 \mod 2; \\
		\left(-p\right)^{\frac{r}{2}}, & r \equiv 0 \mod 2.
	\end{cases}
	\]
\end{lemma}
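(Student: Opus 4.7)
The plan is to use the standard Hecke-type recurrence for the $L$-series coefficients at good primes, which is stated earlier in the excerpt: for $p \nmid N_E$,
\[
a_p \, a_{p^k} = a_{p^{k+1}} + p\, a_{p^{k-1}}.
\]
Since by hypothesis $a_p = 0$, this collapses to the two-term recurrence $a_{p^{k+1}} = -p\, a_{p^{k-1}}$, valid for all $k \ge 1$, with initial data $a_1 = 1$ and $a_p = 0$.

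From here the argument is a straightforward induction on $r$, separated by parity. First I would handle the odd case: $a_{p^1} = 0$ is given, and assuming $a_{p^{2j-1}} = 0$ the recurrence gives $a_{p^{2j+1}} = -p \cdot a_{p^{2j-1}} = 0$, so $a_{p^r} = 0$ whenever $r$ is odd. Next I would do the even case: $a_{p^2} = -p \cdot a_1 = -p = (-p)^1$, and assuming $a_{p^{2j}} = (-p)^j$ the recurrence yields
\[
a_{p^{2j+2}} = -p \cdot a_{p^{2j}} = -p \cdot (-p)^j = (-p)^{j+1},
\]
which closes the induction and gives $a_{p^r} = (-p)^{r/2}$ for even $r$.

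There is essentially no obstacle; the only thing to double-check is that the recurrence stated in the paper applies (it does, because the hypothesis $p \nmid N_{E_{(n)}}$ is precisely the case where the Euler factor is the quadratic $1 - a_p p^{-s} + p^{1-2s}$, which produces exactly this recurrence on coefficients). I would also note, as a sanity check, that the formula is consistent with the generating series: formally, when $a_p = 0$,
\[
\sum_{r \ge 0} a_{p^r}\, T^r = \frac{1}{1 + pT^2} = \sum_{j \ge 0} (-p)^j T^{2j},
\]
which matches the claimed closed form term by term and confirms the computation.
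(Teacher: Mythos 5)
Your proposal is correct and follows the same route as the paper: both arguments collapse the Hecke recurrence $a_p a_{p^k} = a_{p^{k+1}} + p\,a_{p^{k-1}}$ to $a_{p^{k+1}} = -p\,a_{p^{k-1}}$ using $a_p = 0$ and then read off the values from the initial data $a_1 = 1$, $a_p = 0$. Your write-up is in fact more complete than the paper's, which omits the explicit parity induction, and the generating-function check is a nice confirmation.
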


\begin{proof}
	Since \( a_p a_{pk} = a_{p^{k+1}} + p a_{pk-1} \), \( a_p = 0 \), we have \( a_{p^{k+1}} = -p a_{p^{k-1}} \).
	
	So, the size of \( a_{p^{r}} \) is determined by \( a_1, a_p \).
	
Since \( y^2 = x^3 - n^2 x \) satisfies Lemma 3.1, and \(-n^2\) is a quadratic residue, for such \( p \) we have \( a_p = 0 \).

Therefore, on \( y^2 = x^3 - n^2 x \), for every \( p \equiv 3 \mod 4 \), we have the following equation:
\[
a_{p^r} = \begin{cases}
	0, & r \equiv 1 \mod 2 ;\\
	\left(-p\right)^{\frac{r}{2}}, & r \equiv 0 \mod 2.
\end{cases}
\]
\end{proof}	

\subsection{The computation of \( a_w \) for a composite number $w$}\hfill\vskip 3mm

From the introduction, when \( w = w_1 w_2 \) is a composite number where \((w_1, w_2) = 1\), then we have \( a_w = a_{w_1} a_{w_2} \).

Thus, we can assume \( w = p_1^{r_1} \ldots p_m^{r_m} \), then \( a_w = a_{p_1^{r_1}} \ldots a_{p_m^{r_m}} \).

We use the above result to prove the following theorem:

\begin{thm}
	 The coefficients \(a_w\) of the L-series are zero ,if \( w \equiv 3 \mod 4 \).
\end{thm}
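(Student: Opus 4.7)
The plan is to reduce to prime powers via multiplicativity and then force at least one prime-power factor of $w$ to have a vanishing coefficient by a parity argument modulo $4$.

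First I would observe that $w \equiv 3 \pmod 4$ is odd, so $w$ has no factor of $2$. Writing $w = p_1^{r_1}\cdots p_m^{r_m}$ with odd primes $p_i$, repeated use of the multiplicativity relation $a_{mn} = a_m a_n$ for $\gcd(m,n)=1$ gives $a_w = \prod_{i=1}^{m} a_{p_i^{r_i}}$, so it suffices to produce one index $i$ for which $a_{p_i^{r_i}} = 0$.

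The key step is a mod $4$ parity computation. Split the primes into those with $p_i \equiv 1 \pmod 4$, which contribute $p_i^{r_i} \equiv 1 \pmod 4$ regardless of $r_i$, and those with $p_i \equiv 3 \pmod 4$, which contribute $p_i^{r_i} \equiv (-1)^{r_i} \pmod 4$. Multiplying, one obtains
\[
w \equiv (-1)^{\sum_{p_i \equiv 3 (4)} r_i} \pmod 4.
\]
Since $w \equiv 3 \equiv -1 \pmod 4$, the exponent $\sum_{p_i \equiv 3(4)} r_i$ must be odd. In particular, there exists at least one index $i_0$ with $p_{i_0} \equiv 3 \pmod 4$ and $r_{i_0}$ odd.

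For that index, I would finish with a two-case dichotomy. If $p_{i_0} \nmid N_{E_{(n)}}$, then Theorem 3.1 (applied with $a = -n^2$, $b=0$, so that $p_{i_0} \mid b$ and $\left(\frac{-n^2}{p_{i_0}}\right) = \left(\frac{-1}{p_{i_0}}\right) = -1$) yields $a_{p_{i_0}} = 0$, and Lemma 3.2 with the odd exponent $r_{i_0}$ gives $a_{p_{i_0}^{r_{i_0}}} = 0$. If instead $p_{i_0} \mid N_{E_{(n)}}$, then by the additive reduction computation in Section 2 we have $a_{p_{i_0}}=0$, and the relation $a_{p^{r}} = (a_p)^{r}$ valid for $p \mid N_E$ gives $a_{p_{i_0}^{r_{i_0}}} = 0$. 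Either way the product $a_w$ contains a zero factor and vanishes. The only subtlety worth flagging is making sure the mod $4$ bookkeeping is airtight and that the argument covers the ramified primes $p_i \mid n$, but both points are routine once the split above is in place.
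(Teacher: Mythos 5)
Your proof is correct and follows essentially the same route as the paper: reduce to prime powers by multiplicativity, use the mod $4$ congruence to extract a prime $p_{i_0}\equiv 3\pmod 4$ occurring to an odd exponent, and kill that factor via $a_{p_{i_0}}=0$. Your version is in fact slightly tighter than the paper's, since you make the parity bookkeeping explicit via $w\equiv(-1)^{\sum r_i}\pmod 4$ and separately dispose of the ramified primes $p\mid N_{E_{(n)}}$, a case the paper's proof of this theorem leaves implicit (it is covered only in its earlier subsection on $p\mid N_{E_{(n)}}$).
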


\begin{proof}
	If \( w = p_1^{r_1} \ldots p_m^{r_m} \), there exists a \( p_i \) such that \( p_i \equiv 3 \mod 4 \).
	
	Otherwise, for any  $p_i$ , we have \( p_i \equiv 1 \mod 4 \), making \( p_i^{r_i} \equiv 1 \mod 4 \), thus \( w \equiv 1 \mod 4 \). This contradicts the assumption.
	
	Next, we proceed by induction to prove one \( p_i \equiv 3 \mod 4 \) and \( r_i \equiv 1 \mod 2 \).
	
	For \( p_i^2 \equiv 1 \mod 4 \), and thus \( p_i^{r} \equiv 1 \mod 4 \). If \( r \equiv 0 \mod 2 \), then \( w \equiv 1 \mod 4 \).
	
	This contradicts the assumption.
	
	So we assume one \( p_i \equiv 3 \mod 4 \), \( r_i \equiv 1 \mod 2 \).
	
	Thus, by the previous lemma,  $a_{p_i} = 0$ , so $a_w = 0$ .
	\end{proof}

\section{Calculation of the Period of an Elliptic Curve}
	\vskip 3mm
\begin{prop}(Silverman, \cite{Silverman1986})
	   	Let E be an elliptic curve over $\mathbb{C}$, there exist a lattice $\Lambda$ in $\mathbb{C}$ and an isomorphic map $\varphi$:
	   	\[
	   	\varphi:z \longrightarrow
	   	\begin{cases}
	   		(\wp(z), \wp'(z)) ,& \text{if } z \notin \Lambda ;\\
	   		\infty ,& \text{if } z \in \Lambda
	   	\end{cases}
	   	\]
	   	such that $\mathbb{C}/\Lambda \cong E$,
	   \end{prop}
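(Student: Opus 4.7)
The plan is to establish this classical uniformization theorem in two stages. First, for a given lattice $\Lambda \subset \mathbb{C}$, I would construct an elliptic curve $E_\Lambda$ together with an explicit analytic isomorphism $\mathbb{C}/\Lambda \to E_\Lambda$ of the shape claimed; second, I would show that every elliptic curve $E$ over $\mathbb{C}$ arises in this way from a suitable lattice.

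For the first stage, the natural object is the Weierstrass $\wp$-function
\[
\wp_\Lambda(z) = \frac{1}{z^2} + \sum_{\omega \in \Lambda \setminus \{0\}} \left( \frac{1}{(z-\omega)^2} - \frac{1}{\omega^2} \right),
\]
together with the Eisenstein series $g_2(\Lambda) = 60 \sum_{\omega \neq 0} \omega^{-4}$ and $g_3(\Lambda) = 140 \sum_{\omega \neq 0} \omega^{-6}$. A direct comparison of Laurent expansions at the origin yields the algebraic relation $(\wp')^2 = 4\wp^3 - g_2 \wp - g_3$, so the image of $\varphi(z) = (\wp(z), \wp'(z))$ lies on the curve $E_\Lambda : y^2 = 4x^3 - g_2 x - g_3$. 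Non-vanishing of the discriminant $\Delta = g_2^3 - 27 g_3^2$, and hence smoothness of $E_\Lambda$, follows from the observation that $\wp'$ has simple zeros at the three non-trivial half-periods and that the values of $\wp$ at these half-periods are distinct.

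Next I would check that $\varphi$ is a biholomorphism. For injectivity modulo $\Lambda$: if $\wp(z_1) = \wp(z_2)$, then the evenness of $\wp$ and the fact that $\wp$ has degree two as a map $\mathbb{C}/\Lambda \to \mathbb{P}^1$ force $z_1 \equiv \pm z_2 \pmod{\Lambda}$, and the extra condition $\wp'(z_1) = \wp'(z_2)$ together with the oddness of $\wp'$ rules out the minus sign away from the ramification points (where the two cases coincide anyway). For surjectivity: given any $(x_0, y_0) \in E_\Lambda$, the elliptic function $\wp(z) - x_0$ has exactly two zeros on a fundamental parallelogram, and a choice of sign for $\wp'$ at one of them matches $y_0$. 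Holomorphicity is built in, lattice points are handled by sending them to $\infty$, and the group structure is preserved by the classical addition formula for $\wp$.

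The main obstacle is the converse: to produce, for an arbitrary elliptic curve $E$ over $\mathbb{C}$ in short Weierstrass form, a lattice $\Lambda$ whose invariants $g_2(\Lambda), g_3(\Lambda)$ actually match the coefficients of $E$. I would settle this via the surjectivity of the modular $j$-invariant $j : \mathbb{H}/\mathrm{SL}_2(\mathbb{Z}) \to \mathbb{C}$. Computing $j(E)$, picking $\tau \in \mathbb{H}$ with $j(\tau) = j(E)$, and setting $\Lambda_0 = \mathbb{Z} + \mathbb{Z}\tau$ produces a lattice whose associated curve $E_{\Lambda_0}$ shares a $j$-invariant with $E$, hence agrees with $E$ up to a quadratic twist. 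Replacing $\Lambda_0$ by a homothetic lattice $\Lambda = \lambda \Lambda_0$ with the correct scalar $\lambda \in \mathbb{C}^\times$ (chosen so that $\lambda^{-4}$ and $\lambda^{-6}$ absorb the twist) yields a lattice with exactly matching invariants, and the map $\varphi$ from the first stage then gives the desired isomorphism $\mathbb{C}/\Lambda \cong E$.
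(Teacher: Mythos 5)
The paper does not prove this proposition at all: it is quoted from Silverman's book as a known result (the uniformization theorem), so there is no in-paper argument to compare against. Your outline is the standard proof of exactly that theorem --- Laurent expansion of $\wp$ to get $(\wp')^2 = 4\wp^3 - g_2\wp - g_3$, nonvanishing of $\Delta$ from the distinctness of $\wp$ at the half-periods, degree-two considerations for bijectivity, and surjectivity of the modular $j$-function plus a homothety $\Lambda \mapsto \lambda\Lambda$ to realize an arbitrary curve --- and it is essentially correct as a sketch. The only points worth tightening are that surjectivity of $j$ is itself a nontrivial input you are invoking without proof, and that the phrase ``quadratic twist'' is slightly imprecise when $j(E) = 0$ or $1728$ (the twist can be cubic, quartic, or sextic there); your homothety argument still goes through in those cases because one of $g_2, g_3$ vanishes and only the other needs to be matched, but you should say so explicitly.
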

where $$\wp(z)=\frac{1}{z^{2}}+\sum_{\stackrel{\omega\in\Lambda}{\omega\neq0}}\left(\frac{1}{(z-\omega)^{2}}-\frac{1}{\omega^{2}}\right),
\wp'(z)=-2\sum_{\omega\in\Lambda}\frac{1}{(z-\omega)^{3}}.$$
	
	   For elliptic curves over $\mathbb{C}$, they are isomorphic to $\mathbb{C}/\Lambda$. Here we need calculate the period of the elliptic curve.
	
	   Let $\Lambda = \mathbb{Z} \omega_1 + \mathbb{Z} \omega_2$, we have
	
	   \begin{equation*}
	   	\omega_1 = \int_{-n}^0 \frac{1}{\sqrt{x^3 - n^2 x}} \, dx \quad \text{and} \quad \omega_2 = \int_0^n \frac{1}{\sqrt{x^3 - n^2 x}} \, dx.
	   \end{equation*}
	
	   We need the following formulae to calculate the periods:
	
	   \begin{prop}(Knapp, \cite{Knapp1992}) For $a < b < c$, there is
	
	   \begin{equation*}
	   	\int_a^b \frac{dx}{\sqrt{(x-a)(x-b)(x-c)}} = \frac{\pi}{M\left(\sqrt{c-a}, \sqrt{c-b}\right)},
	   \end{equation*}
	
	   \begin{equation*}
	   	\int_b^c \frac{dx}{\sqrt{(x-a)(x-b)(x-c)}} = \frac{i\pi}{M\left(\sqrt{c-a}, \sqrt{c-b}\right)},
	   \end{equation*}
	where $M(x, y)$ is the arithmetic-geometric mean of $x$ and $y$.
	   \end{prop}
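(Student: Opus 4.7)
The plan is to reduce each integral, via a trigonometric substitution, to the complete elliptic integral of the first kind
$$I(A, B) = \int_0^{\pi/2} \frac{d\phi}{\sqrt{A^2\cos^2\phi + B^2\sin^2\phi}},$$
and then invoke Gauss's theorem $I(A, B) = \pi/(2 M(A, B))$, which is the heart of the matter.

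For the first integral, I would substitute $x = a + (b - a)\sin^2\phi$, so that $\phi$ runs from $0$ to $\pi/2$ as $x$ runs from $a$ to $b$. The three factors simplify to $(x - a) = (b - a)\sin^2\phi$, $(b - x) = (b - a)\cos^2\phi$, and $(c - x) = (c - a)\cos^2\phi + (c - b)\sin^2\phi$. Using $dx = 2(b - a)\sin\phi\cos\phi\, d\phi$ and cancelling $\sin\phi\cos\phi$ against the square root leaves the integrand $2\, d\phi/\sqrt{(c - a)\cos^2\phi + (c - b)\sin^2\phi}$, which is exactly $2 I(\sqrt{c - a}, \sqrt{c - b})$. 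For the second integral $\int_b^c$, an analogous substitution parametrising $(b, c)$ produces a similar form, except that $(x - a)(x - b)(x - c)$ is now negative on this interval, so the chosen branch of the square root introduces a factor of $i$; with the branch convention that matches the period lattice of $y^2 = x^3 - n^2 x$, the result is $2i\, I(\sqrt{c - a}, \sqrt{c - b})$.

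It remains to prove Gauss's identity. The classical route is the Landen transformation: one exhibits a change of variable (for example $\sin\phi = 2A\sin\psi/[(A + B) + (A - B)\sin^2\psi]$) under which $I(A, B) = I((A + B)/2, \sqrt{AB})$. Iterating identifies $I(A, B)$ with $I(a_n, b_n)$ for the arithmetic-geometric iterates; since $a_n, b_n \to M := M(A, B)$, passing to the limit yields $I(M, M) = \int_0^{\pi/2} d\phi/M = \pi/(2M)$, as required.

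The main obstacle is verifying the Landen step: one must show explicitly that the proposed substitution pulls back the integrand $d\phi/\sqrt{A^2\cos^2\phi + B^2\sin^2\phi}$ to that of $I((A + B)/2, \sqrt{AB})$, which demands a sequence of somewhat intricate trigonometric identities together with careful sign bookkeeping on $[0, \pi/2]$. Once this is in hand, the remaining ingredients — the quadratic convergence $a_n, b_n \to M$, the dominated-convergence passage to the limit, and the final evaluation $I(M, M) = \pi/(2M)$ — are routine.
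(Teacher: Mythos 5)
The paper offers no proof of this proposition at all---it is quoted from Knapp---so there is no internal argument to compare against. Your plan of reducing each period to the integral $I(A,B)=\int_0^{\pi/2}\bigl(A^2\cos^2\phi+B^2\sin^2\phi\bigr)^{-1/2}\,d\phi$ and invoking Gauss's theorem $I(A,B)=\pi/\bigl(2M(A,B)\bigr)$ is the standard route, and your treatment of the first integral is correct: the substitution $x=a+(b-a)\sin^2\phi$ does yield $2\,I\bigl(\sqrt{c-a},\sqrt{c-b}\bigr)$.

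The gap is in the second integral. If you carry out the ``analogous substitution'' $x=b+(c-b)\sin^2\phi$ explicitly, you get $x-b=(c-b)\sin^2\phi$ and $c-x=(c-b)\cos^2\phi$, but the third factor is $x-a=(b-a)\cos^2\phi+(c-a)\sin^2\phi$, so the integral reduces (up to the branch factor $i$) to $2\,I\bigl(\sqrt{b-a},\sqrt{c-a}\bigr)$, i.e.\ to $i\pi/M\bigl(\sqrt{c-a},\sqrt{b-a}\bigr)$---the arithmetic--geometric mean of $\sqrt{c-a}$ with $\sqrt{b-a}$, not with $\sqrt{c-b}$. These are genuinely different: for $a=0$, $b=1$, $c=4$ one finds $\int_1^4 dx/\sqrt{x(x-1)(4-x)}=\pi/M(2,1)\approx 2.157$, whereas $\pi/M(2,\sqrt{3})\approx 1.686$. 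So the second displayed identity, as printed, is false for general $a<b<c$; it holds exactly when $b-a=c-b$, which happens to be the only case the paper uses ($a=-n$, $b=0$, $c=n$). By asserting that your substitution ``produces a similar form'' with the same pair $\bigl(\sqrt{c-a},\sqrt{c-b}\bigr)$, you passed over precisely the step at which the computation departs from the claimed formula; a correct proof must either restate the proposition with $M\bigl(\sqrt{c-a},\sqrt{b-a}\bigr)$ or restrict to the symmetric case. Separately, the Landen-transformation step you defer is the analytic heart of Gauss's theorem and would need to be written out or cited for completeness, but that part of your plan is sound.
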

	   
 \begin{thm}
	    	$E_{(n)}\cong \mathbb{C}/\left(\frac{\pi}{\sqrt{n}} G \mathbb{Z}[i]\right)$ over $\mathbb{C}$.
	    \end{thm}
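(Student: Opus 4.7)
The plan is to apply Proposition 4.1 to produce a lattice and then use Proposition 4.3 together with the homogeneity of the arithmetic–geometric mean to recognise that lattice as $\frac{\pi G}{\sqrt{n}}\mathbb{Z}[i]$, where $G = 1/M(\sqrt{2},1)$ is Gauss's constant (which is what the symbol $G$ in the statement must denote, since this is the only scale-free AGM quantity produced by the three-root pattern $-n,0,n$).

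First I would factor the right-hand side of $y^2 = x^3 - n^2 x$ as $x(x-n)(x+n)$, so that the three real roots are $a = -n$, $b = 0$, $c = n$, and these are in the increasing order required by Proposition 4.3. With that identification the cubic under the square root in the integrals defining $\omega_1$ and $\omega_2$ matches $(x-a)(x-b)(x-c)$ exactly, so Proposition 4.3 applies on the nose. The first formula gives
\begin{equation*}
\omega_1 = \int_{-n}^{0} \frac{dx}{\sqrt{x^3 - n^2 x}} = \frac{\pi}{M\!\left(\sqrt{c-a},\,\sqrt{c-b}\right)} = \frac{\pi}{M(\sqrt{2n},\sqrt{n})},
\end{equation*}
and the second formula gives
\begin{equation*}
\omega_2 = \int_{0}^{n} \frac{dx}{\sqrt{x^3 - n^2 x}} = \frac{i\pi}{M(\sqrt{2n},\sqrt{n})}.
\end{equation*}
(A quick sign check on the two intervals confirms the branch of the square root: the radicand is positive on $(-n,0)$ and negative on $(0,n)$, which is consistent with $\omega_1\in\mathbb{R}$ and $\omega_2\in i\mathbb{R}$.)

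Next I would use the scaling property $M(\alpha x,\alpha y) = \alpha\,M(x,y)$ of the arithmetic–geometric mean, applied with $\alpha = \sqrt{n}$, to pull the $n$-dependence outside:
\begin{equation*}
M(\sqrt{2n},\sqrt{n}) = \sqrt{n}\,M(\sqrt{2},1) = \frac{\sqrt{n}}{G}.
\end{equation*}
Substituting this into the two period expressions yields $\omega_1 = \frac{\pi G}{\sqrt{n}}$ and $\omega_2 = i\,\frac{\pi G}{\sqrt{n}} = i\omega_1$. Thus
\begin{equation*}
\Lambda = \mathbb{Z}\omega_1 + \mathbb{Z}\omega_2 = \frac{\pi G}{\sqrt{n}}\bigl(\mathbb{Z} + i\mathbb{Z}\bigr) = \frac{\pi}{\sqrt{n}}\,G\,\mathbb{Z}[i],
\end{equation*}
and combining this with the isomorphism $\mathbb{C}/\Lambda \cong E_{(n)}$ from Proposition 4.1 gives the claimed description.

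There is no real obstacle here: both Propositions 4.1 and 4.3 are imported as black boxes, and the remaining content is the AGM rescaling. The only place where mild care is needed is verifying that the sign conventions and the choice of branch of the square root in Proposition 4.3 line up with the integrals defining $\omega_1$ and $\omega_2$, so that the real period comes from the $[-n,0]$ integral and the purely imaginary one from the $[0,n]$ integral, giving the rectangular lattice $\mathbb{Z}[i]$ rather than some other CM lattice.
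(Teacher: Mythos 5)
Your argument is the same as the paper's: substitute $a=-n$, $b=0$, $c=n$ into the AGM period formulas to get $\omega_1 = \pi/M(\sqrt{2n},\sqrt{n})$ and $\omega_2 = i\pi/M(\sqrt{2n},\sqrt{n})$, then use the homogeneity $M(\sqrt{2n},\sqrt{n})=\sqrt{n}\,M(\sqrt{2},1)$ and Gauss's constant $G=1/M(\sqrt{2},1)$ to identify the lattice as $\frac{\pi}{\sqrt{n}}G\,\mathbb{Z}[i]$. Your version is in fact slightly more careful than the paper's, since you also check the sign of the radicand on each interval to confirm which period is real and which is purely imaginary.
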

\begin{proof}
	  	   Let $a=-n$, $b=0$, $c=n$ in Proposition 4.2, we can get
	
	  \begin{equation*}
	  	\omega_1 = \frac{\pi}{M\left(\sqrt{2n}, \sqrt{n}\right)} \quad \text{and} \quad \omega_2 = \frac{i\pi}{M\left(\sqrt{2n}, \sqrt{n}\right)}.
	  \end{equation*}
	
	  Since $M\left(\sqrt{2n}, \sqrt{n}\right) = \sqrt{n} M(\sqrt{2}, 1)$ and
	$$1/M(\sqrt{2}, 1) = G \quad \text{(Gauss's constant, approximately } 0.8346268 \ldots \text{)},$$	
	  we have	
	  \begin{equation*}
	  	\omega_1 = \frac{\pi}{\sqrt{n} }G \quad \text{and} \quad \omega_2 = \frac{i \pi}{\sqrt{n} }G.
	  \end{equation*}	
	  So	
	  \begin{equation*}
	  	\Lambda = \mathbb{Z} \frac{\pi}{\sqrt{n} }G + i \mathbb{Z} \frac{\pi}{\sqrt{n} }G = \frac{\pi}{\sqrt{n} }G \mathbb{Z}[i].
	  \end{equation*}
	  \end{proof}

\section{Modularity of Elliptic Curves}
	\vskip 3mm
	   \begin{prop}(Silverman, \cite{Silverman1986})
	   Let \( E \) be an elliptic curve defined over \( \mathbb{Q} \) with conductor \( N \). Then there exists a map defined over \( \mathbb{Q} \),
\[
\varPhi : X_0(N) \longrightarrow E,
\]
which is called the modularity map

	   \end{prop}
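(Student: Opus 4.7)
The approach is via the Eichler--Shimura construction, with the existence of the requisite newform supplied by the Modularity Theorem of Wiles, Taylor--Wiles, and Breuil--Conrad--Diamond--Taylor. The plan is to (i) produce a weight-two newform $f \in S_2(\Gamma_0(N))$ attached to $E$, (ii) use the classical analytic construction to build a morphism from $X_0(N)$ onto an abelian quotient $A_f$ of the Jacobian $J_0(N)$, and (iii) identify $A_f$ with $E$ up to $\mathbb{Q}$-isogeny and compose with that isogeny to obtain $\varPhi$.

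First I would invoke the Modularity Theorem, which guarantees a normalized cuspidal newform $f(\tau) = \sum_{n \geq 1} a_n q^n$ of weight $2$ and level $\Gamma_0(N)$ whose Hecke eigenvalues match the Frobenius traces of $E$, equivalently $L(E,s) = L(f,s)$. This matching is exactly compatible with the $a_n$ studied in Section 3, and for the curve $E_{(n)}$ it produces a modular form of weight $2$ and level $N_{E_{(n)}} = 2^5 n^2$.

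Next I would carry out the Eichler--Shimura step. The differential $\omega_f = 2\pi\mi\, f(\tau)\,d\tau$ is holomorphic on $X_0(N)$, and integrating it yields a well-defined complex-analytic map
\[
X_0(N) \longrightarrow \mathbb{C}/\Lambda_f, \qquad \tau \longmapsto \int_{\mi\infty}^\tau \omega_f \pmod{\Lambda_f},
\]
where $\Lambda_f$ is the period lattice generated by $\int_\gamma \omega_f$ over integral $1$-cycles $\gamma$ on $X_0(N)$. Factoring through the Albanese map $X_0(N) \hookrightarrow J_0(N)$, the image lies in the one-dimensional Hecke-stable quotient $A_f$ cut out by $f$. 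A descent and rationality argument due to Shimura shows the map is defined over $\mathbb{Q}$, while Faltings' isogeny theorem, applied to the agreement $L(A_f,s) = L(E,s)$, yields a $\mathbb{Q}$-isogeny $A_f \to E$. Composing gives the desired $\varPhi : X_0(N) \to E$ over $\mathbb{Q}$.

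The main obstacle is step (i): the existence of $f$, i.e.\ the Modularity Theorem itself, is a deep result that must simply be cited, as its proof is well outside the techniques of this paper. By contrast, steps (ii) and (iii) are standard once the modularity ingredient is in place, and in practice $\varPhi$ can be exhibited quite explicitly by truncating the $q$-expansion of $f$, an observation we shall use in Section 6 when computing Heegner points.
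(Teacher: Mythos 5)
Your outline is correct, but note that the paper does not actually prove this proposition: it is stated as a quoted black-box result (attributed to Silverman's book) and no argument is given, so there is nothing in the paper to compare against step by step. Your sketch --- the Modularity Theorem to produce a newform $f$ with $L(E,s)=L(f,s)$, the Eichler--Shimura construction to build the one-dimensional quotient $A_f$ of $J_0(N)$, Shimura's rationality results to get the maps defined over $\mathbb{Q}$, and Faltings' isogeny theorem to identify $A_f$ with $E$ up to $\mathbb{Q}$-isogeny --- is the standard and correct skeleton, and you rightly isolate the one ingredient (modularity itself) that can only be cited. Two small corrections: the attribution to Silverman (1986) is anachronistic, since at that date modularity was only the Taniyama--Shimura conjecture, so the honest citation for the existence of $f$ is Wiles, Taylor--Wiles, and Breuil--Conrad--Diamond--Taylor; and the phrase ``the image lies in the one-dimensional Hecke-stable quotient $A_f$'' is loose --- the image of $X_0(N)$ in $J_0(N)$ generates the whole Jacobian, and the map to $A_f$ is obtained by composing the Albanese embedding with the projection $J_0(N)\to A_f$. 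Neither point affects the validity of your construction.
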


Based on the modular form of the elliptic curve $E$, it has \( L \)-series coefficient $a_2 = 0$ and $a_m = 0$ for $m = 3 \pmod{4}$, so we only need to calculate $a_m$ where $m = 1 \pmod{4}$.
	
\begin{prop}
	  For $n=5$, let $E_{(5)}$ denote the elliptic curve $y^2 = x^3 - n^2x$. The modularity map $\Phi$ of  $E_{(5)}$ is defined by	
	  $$\Phi: X_0(800) \to E_{(5)}$$ 
Then we have $\Phi$ is
	  \begin{equation*}
\begin{aligned}
	  	\Phi(q)=&q-\frac{1}{3}q^9 - \frac{6}{13}q^{13} - \frac{2}{17}q^{17} - \frac{10}{29}q^{29} + \frac{2}{37}q^{37} + \frac{10}{41}q^{41} - \frac{1}{7}q^{49} +\\
 &+\frac{14}{53}q^{53} - \frac{10}{61}q^{61} +\frac{6}{73}q^{73} + \frac{1}{9}q^{81} - \frac{18}{97}q^{97}+\dots.
\end{aligned}	  
\end{equation*}
	  In addition, if we consider $X_0(N)$ as algebraic variety $(j(\omega),j_N(\omega))$, then $\Phi$ is defined over $\mathbb{Q}$.
	  \end{prop}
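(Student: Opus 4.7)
The plan is to combine the modularity theorem with the coefficient vanishings of Sections 2--3 and the period computation of Theorem 4.3. First, invoke modularity (Wiles, Breuil--Conrad--Diamond--Taylor): since $E_{(5)}/\mathbb{Q}$ has conductor $N_{E_{(5)}} = 2^5 \cdot 5^2 = 800$ by Section 2, there exists a normalized weight-two newform $f \in S_2(\Gamma_0(800))$ with $q$-expansion $f(q) = \sum_{m \geq 1} a_m q^m$ whose coefficients are precisely the $L$-series coefficients of $E_{(5)}$, together with a surjective $\mathbb{Q}$-morphism $\pi_f : J_0(800) \to E_{(5)}$ pulling back the N\'eron differential to a rational multiple of $2\pi i\, f(\tau)\, d\tau$.

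Second, construct $\Phi$ by integration. Writing $\omega_f = f(q)\, dq/q$ and integrating term-by-term from the cusp at $i\infty$ gives
\[
F(q) = \int_{0}^{q} f(q)\, \frac{dq}{q} = \sum_{m \geq 1} \frac{a_m}{m}\, q^m,
\]
which represents the composition $X_0(800) \to J_0(800) \to \mathbb{C}/\Lambda$. By Theorem 4.3 the lattice is $\Lambda = \frac{\pi}{\sqrt{5}} G\, \mathbb{Z}[i]$, and composing with the Weierstrass uniformization $\varphi$ of Proposition 4.1 yields $\Phi$; in the $\mathbb{C}/\Lambda$ chart $\Phi$ coincides with $F$, giving the stated $q$-expansion (after normalising the Manin constant to $1$ for the optimal quotient).

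Third, verify the displayed coefficients. The results of Section 3 already force $a_m = 0$ whenever $m$ is even (because additive reduction at $2$ gives $a_2 = 0$, hence $a_{2^k} = 0$, and multiplicativity propagates), whenever $5 \mid m$ (by the same argument at $5$), or whenever $m \equiv 3 \pmod 4$ (by Theorem 3.2). The surviving indices are exactly odd $m$ coprime to $5$ with $m \equiv 1 \pmod 4$, which matches the support $\{1, 9, 13, 17, 29, 37, 41, 49, 53, 61, 73, 81, 97\}$ displayed in the statement. For each listed prime $p \equiv 1 \pmod 4$ I would compute $a_p = p + 1 - \#E_{(5)}(\mathbb{F}_p)$ by direct point-counting, while the prime-power entries $a_9 = -3$, $a_{49} = -7$, $a_{81} = 9$ follow from Lemma 3.3 applied with $p = 3, 7, 3$ respectively. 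The rationality assertion at the end of the proposition follows from the canonical $\mathbb{Q}$-structure on $X_0(N)$ cut out by the modular polynomial $\Phi_N(j(\tau), j(N\tau)) = 0$, together with Shimura's theorem that the optimal quotient attached to a $\mathbb{Q}$-rational newform is itself defined over $\mathbb{Q}$.

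The main obstacle is the brute-force point-count on $E_{(5)}$ modulo each prime $p \equiv 1 \pmod 4$ up to $97$: this is routine but tedious, and in particular the vanishing $a_{89} = 0$ (which explains the gap between the $q^{81}$ and $q^{97}$ terms) is not implied by any general statement of the preceding sections and must be confirmed by explicit computation.
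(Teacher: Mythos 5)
Your proposal is, in approach, the proof the paper should have written: the paper's own ``proof'' consists of the single sentence ``we can quickly conclude that the modular form of $E_{(5)}$ is $f(q)=q-3q^9-6q^{13}-\cdots$'', with no derivation of the conductor $800$, no explanation that $\Phi$ is obtained by integrating $f\,dq/q$ term by term (which is why the displayed coefficients are $a_m/m$), and no justification of the individual $a_m$. Your three steps --- modularity at level $N_{E_{(5)}}=2^5\cdot 5^2=800$, the expansion $\sum_m \frac{a_m}{m}q^m$ of the modular parametrization, and the verification of the coefficients via the vanishing results of Section 3 together with point counts at the primes $p\equiv 1\pmod 4$ and Lemma 3.3 at $p=3,7$ --- supply exactly what is missing, and your remark on the Manin constant and on the $\mathbb{Q}$-structure of $X_0(N)$ via $(j(\omega),j_N(\omega))$ addresses the rationality claim that the paper leaves implicit. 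One small point: for composite indices such as $21,33,77\equiv 1\pmod 4$ you also need $a_p=0$ for the prime factor $p\equiv 3\pmod 4$ plus multiplicativity, not Theorem 3.2 itself; this is implicit in your use of Lemma 3.3 but worth stating.

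The substantive issue is the one you yourself flagged: the coefficients not forced to vanish by the general theory must be checked, and when one does so the displayed expansion fails. Since $E_{(n)}$ is the quadratic twist of $E_{(1)}:y^2=x^3-x$ by $n$, one has $a_p(E_{(n)})=\left(\frac{n}{p}\right)a_p(E_{(1)})$ for $p\nmid 2n$, and $a_p(E_{(1)})=2a$ with $p=a^2+b^2$, $a$ odd, $a+b\equiv 1\pmod 4$. This gives $a_{53}(E_{(5)})=\left(\frac{5}{53}\right)\cdot 14=-14$, not $+14$ as displayed, and $a_{89}(E_{(5)})=\left(\frac{5}{89}\right)\cdot 10=+10$, not $0$; so the term $-\frac{14}{53}q^{53}$ should replace $+\frac{14}{53}q^{53}$ and a term $\frac{10}{89}q^{89}$ is missing. (The same twist relation reproduces every coefficient of the $n=13$ expansion in Section 7, including $+14q^{53}$ and $-10q^{89}$ there, which corroborates the calibration.) Your suspicion that $a_{89}=0$ ``is not implied by any general statement and must be confirmed by explicit computation'' was exactly right --- the computation refutes it. So your plan is sound, but carrying it out ends by correcting the proposition rather than proving it as stated.
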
	

\begin{proof}
	  We can quickly conclude that the modular form of $E_{(5)}$ is
	  \begin{equation*}
\begin{aligned}
	  	f(q) =& q-3q^9 - 6q^{13} - 2q^{17} - 10q^{29} + 2q^{37} + 10q^{41} - 7q^{49} +\\
&+ 14q^{53} - 10q^{61} + 6q^{73} + 9q^{81} - 18q^{97}  + O(q^{100}).
\end{aligned}	   
\end{equation*}
      \end{proof}

\section{Heegner Points}
\vskip 3mm
\begin{definition}
$\omega$ is a Heegner point of $X_0(N)$ if it satisfies \[ A\omega^2 + B\omega + C = 0 \] where \( A, B \) and \( C \) are relatively prime integers and \( A \equiv 0 \pmod{N} \), $D = B^2 - 4AC \equiv r^2 \pmod{4N}$, $r \equiv B \pmod{2N}$.
\end{definition}

\begin{prop}(Gross and Zagier, \cite{Gross1983})
	Suppose $\epsilon = -1$, assume $U=\Phi (\omega_1) + \dots + \Phi (\omega_h)$, we choose \{$\omega_i$\} as all heegner point with the same $N$ and $D$, then $(\wp(U), \wp'(U)) \in E(\mathbb{Q})$.
\end{prop}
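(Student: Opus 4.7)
The plan is to combine the complex multiplication theory of Heegner points with the $\Q$-rationality of the modularity map $\Phi$. A Heegner point $\omega \in X_0(N)$ satisfying $A\omega^2 + B\omega + C = 0$ with discriminant $D = B^2 - 4AC$ corresponds to a cyclic $N$-isogeny $E_\tau \to E_\tau'$ between two elliptic curves both admitting complex multiplication by the order $\mathcal{O}_D$ in the imaginary quadratic field $K = \Q(\sqrt{D})$. The first step is to invoke Shimura's reciprocity law to conclude that, viewed as an algebraic point on $X_0(N)$, each $\omega_i$ is defined over the ring class field $H_D$ of $K$, and that $\mathrm{Gal}(H_D/K) \cong \mathrm{Pic}(\mathcal{O}_D)$ permutes the full collection $\{\omega_1,\ldots,\omega_h\}$ of Heegner points with fixed $(N,D)$ simply transitively.

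Because the modularity map $\Phi : X_0(N) \to E$ is defined over $\Q$ by Proposition 5.2, the Galois action on the images $\Phi(\omega_i) \in E(H_D)$ mirrors the action on the $\omega_i$. The sum $U = \Phi(\omega_1) + \cdots + \Phi(\omega_h)$, interpreted through the uniformization $\C/\Lambda \cong E$ of Theorem 4.1 so that addition in $\C/\Lambda$ matches addition on $E$, is therefore the trace of a single Heegner point from $H_D$ down to $K$. Consequently the point $P := (\wp(U), \wp'(U))$ already lies in $E(K)$.

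The remaining and more delicate step is the descent from $E(K)$ to $E(\Q)$, which is where the hypothesis $\epsilon = -1$ on the sign of the functional equation enters. Complex conjugation $c \in \mathrm{Gal}(\overline{\Q}/\Q)$ permutes Heegner points via a formula of the form $c(\omega_i) = w_N(\omega_j)$, where $w_N$ denotes the Fricke involution on $X_0(N)$ and $\omega_j$ lies in the same Galois orbit. The Fricke involution acts on the newform attached to $E$ by the scalar $-\epsilon$; pulling this through the $\Q$-rational map $\Phi$ shows that $w_N$ acts on $E$ as multiplication by $-\epsilon$. Therefore $c(P) \equiv -\epsilon \cdot P$ modulo a finite torsion contribution, and when $\epsilon = -1$ this reads $c(P) \equiv P$, so $P$ is fixed by complex conjugation up to torsion. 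A further check of $E(\Q)_{\mathrm{tors}}$, which for $E_{(n)}$ is exhausted by $(0,0)$ and $(\pm n, 0)$, allows one to absorb the torsion ambiguity and conclude $P \in E(\Q)$.

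The principal obstacle is the precise bookkeeping in this last step: intertwining complex conjugation with the Fricke involution on $X_0(N)$, transferring the identity across $\Phi$ with the correct sign, and eliminating the potential torsion drift in the descent. This is exactly the content of the Gross--Zagier analysis in \cite{Gross1983}. The CM structure of $E_{(n)}$ by $\Z[i]$, reflected in the square lattice $\frac{\pi}{\sqrt{n}}\,G\,\Z[i]$ from Theorem 4.1, is expected to simplify the computation of the torsion correction in our particular family of curves.
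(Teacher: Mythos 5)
Your proposal follows the same two-stage route as the paper: first use the $\Q$-rationality of $\Phi$ together with class field theory to see that the full Galois orbit of Heegner points with fixed $(N,D)$ sums to a point of $E(K)$ (the paper phrases this via $\Q(j(\omega),j_N(\omega))$ being the Hilbert class field of $K=\Q(\omega)$ and $[\Q(\omega,j(\omega)):\Q(\omega)]=h(D)$; your Shimura-reciprocity/ring-class-field formulation is the same content), and then descend from $E(K)$ to $E(\Q)$ using $\epsilon=-1$. Where you genuinely diverge is the descent step, and there your version is the more careful one. The paper simply asserts that $\epsilon=-1$ implies $U=\overline{U}\in\R$ and concludes via $E(\R)\cap E(K)=E(\Q)$, with no explanation of why the sign of the functional equation controls the action of complex conjugation and no mention of torsion. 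You supply the missing mechanism --- complex conjugation sends a Heegner point to the $w_N$-translate of another point in the same orbit, $w_N$ acts on the newform by $-\epsilon$, and pushing this through the $\Q$-rational $\Phi$ gives $c(P)=-\epsilon P$ up to a torsion point --- and you correctly flag that the conclusion only holds modulo the torsion ambiguity, which for $E_{(n)}$ must be absorbed using $E(\Q)_{\mathrm{tors}}\cong\Z/2\Z\oplus\Z/2\Z$. This is the standard Gross--Zagier/Birch bookkeeping and is exactly what is needed to justify the step the paper glosses over; the paper's clean identity $E(\R)\cap E(K)=E(\Q)$ is only usable after your analysis shows $U$ is real up to a controllable torsion term.
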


\begin{proof}
Because $\Phi$ is a rational map defined over $\mathbb{Q}$, so we have $\Phi(\omega) \in E(\mathbb{Q}(j(\omega),j_N(\omega)))$. From class field theory, we know that $\mathbb{Q}(j(\omega),j_N(\omega))$ is the Hilbert class field of $\mathbb{Q}(\omega)$, and $\mathbb{Q}(\omega,j(\omega))$ is a Galois extension of $\mathbb{Q}(\omega)$. We also have the equation
\[
[\mathbb{Q}(\omega,j(\omega)) : \mathbb{Q}(\omega)] = h(D),
\]
where $h(D)$ is the ideal class number.

We obtain $\Phi(\omega) \in \mathbb{Q}((\omega),j(\omega))$, since
\[
\mathbb{Q}(j(\omega),j_N(\omega)) \subset \mathbb{Q}(\omega,j(\omega))
\]

We can find Heegner points $\omega_i$, each satisfying $D = B_i^2 - 4A_iC_i$, $D \equiv r^2 \pmod{4N}$, $r \equiv B_i \pmod{2N}$. Then we obtain the sum $\Phi (\omega_i) + \dots + \Phi (\omega_h) \in E(\mathbb{Q}(\omega))$.

We now assume $U = \Phi(\omega_1) + \ldots + \Phi(\omega_n)$.

Suppose $\epsilon = -1$, then $U = \overline{U} \in \mathbb{R}$. We have $(\wp(U), \wp'(U)) \in E(\mathbb{R})$.

In a word, $(\wp(U), \wp'(U)) \in E(\mathbb{R}) \cap E(K) = E(\mathbb{Q})$.
\end{proof}

Next, we continue with $n=5$ as an example, and first solve for the points of $x^2 \equiv -31 \pmod{4 \times 800}$.

There is one of solutions $x \equiv 113 \pmod{4 \times 800}$.

We take $D = -31$, $h(D) = 3$, and thus we find three Heegner points on $X_0(800)$.

Take
\[
\begin{aligned}
	\omega_1 &= \frac{\sqrt{-31}}{16000} - \frac{3313}{16000}, \\
	\omega_2 &= \frac{\sqrt{-31}}{16000} - \frac{12687}{16000}, \\
	\omega_3 &= \frac{\sqrt{-31}}{16000} - \frac{1397}{16000}.
\end{aligned}
\]

Estimate $U = \Phi(\omega_1) + \Phi(\omega_2) + \Phi(\omega_3) \approx -0.874107405430\ldots+i1.726197864\ldots$
\begin{prop}(Silverman, \cite{Silverman1986})
	For the elliptic curve $E_{(n)}: y^2 = x^3 - n^2x$ over $\mathbb{Q}$. The torsion subgroup is $\mathbb{Z}/2\mathbb{Z} \oplus \mathbb{Z}/2\mathbb{Z}$ and  $(0,0)$, $(n,0)$ are its generator.
\end{prop}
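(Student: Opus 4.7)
The first step is to exhibit the $(\mathbb{Z}/2\mathbb{Z})^2$ subgroup explicitly. The Weierstrass equation factors over $\mathbb{Q}$ as $y^2 = x(x-n)(x+n)$, so the three non-identity $2$-torsion points $(0,0)$, $(n,0)$, $(-n,0)$ are all rational; together with $\mathcal{O}$ they form a subgroup isomorphic to $(\mathbb{Z}/2\mathbb{Z})^2$. A direct application of the chord-tangent law (the line through $(0,0)$ and $(n,0)$ is $y=0$, meeting $E_{(n)}$ again at $(-n,0)$) gives $(0,0)+(n,0)=(-n,0)$, so the pair $\{(0,0),(n,0)\}$ generates this $2$-torsion subgroup. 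Hence $(\mathbb{Z}/2\mathbb{Z})^2 \subseteq E_{(n)}(\mathbb{Q})_{\mathrm{tors}}$.

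The second step, the heart of the argument, is to show that this inclusion is an equality by reduction modulo good primes. Recall the standard fact that for any prime $p>2$ of good reduction, the reduction map $E_{(n)}(\mathbb{Q})_{\mathrm{tors}} \hookrightarrow E_{(n)}(\mathbb{F}_p)$ is injective. By Section~2 the conductor is $N_{E_{(n)}} = 2^5 n^2$, so good reduction holds exactly when $p \nmid 2n$, and by Lemma~3.1 for such $p$ with $p\equiv 3\pmod 4$ one has $\#E_{(n)}(\mathbb{F}_p)=p+1$. I would therefore choose two primes $p_1,p_2\equiv 3\pmod 4$, both coprime to $2n$, with $\gcd(p_1+1,p_2+1)=4$. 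Then $|E_{(n)}(\mathbb{Q})_{\mathrm{tors}}|$ divides both $p_i+1$, so divides $4$; together with the lower bound from the first step this forces $E_{(n)}(\mathbb{Q})_{\mathrm{tors}}=\langle(0,0),(n,0)\rangle\cong(\mathbb{Z}/2\mathbb{Z})^2$.

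The existence of such $p_1,p_2$ is obtained from Dirichlet's theorem on primes in arithmetic progressions: one picks $p_1\equiv 3\pmod 8$ (so the $2$-part of $p_1+1$ is exactly $4$) and $p_2\equiv 1\pmod 3$ (killing any factor of $3$) with both primes coprime to $n$. More concretely, if $3\nmid n$ one can simply take $p=3$ with $p+1=4$ and conclude in a single step; otherwise a pair like $p_1\equiv 19\pmod{24}$ and $p_2\equiv 11\pmod{24}$ works, since $(p_1+1,p_2+1)$ equals $(20,12)$ modulo higher prime issues and has $\gcd$ equal to $4$.

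The main obstacle is purely bookkeeping: accommodating $n$ that is divisible by small primes so that my preferred choices $p\in\{3,7,11,\dots\}$ might coincide with a prime of bad reduction. Dirichlet's theorem resolves this uniformly, since each residue class mod $24$ contains infinitely many primes, so one can always avoid the finitely many prime divisors of $n$. I would not need Mazur's classification here, because the reduction bound of $4$ together with the lower bound of $4$ from the explicit $2$-torsion already pins down the group; reduction at \emph{two} well-chosen primes is cleaner than trying to conclude from a single one.
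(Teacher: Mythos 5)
The paper offers no proof of this proposition at all: it is stated as a quoted result with a citation to Silverman, so there is nothing internal to compare your argument against. Your proposal supplies the standard proof (the one found, e.g., in Koblitz's treatment of the congruent number curve), and its two main steps are sound: the explicit rational $2$-torsion $\{\mathcal{O},(0,0),(n,0),(-n,0)\}$ gives the lower bound, and injectivity of reduction $E_{(n)}(\mathbb{Q})_{\mathrm{tors}}\hookrightarrow E_{(n)}(\mathbb{F}_p)$ at odd primes of good reduction, combined with the supersingular count $\#E_{(n)}(\mathbb{F}_p)=p+1$ for $p\equiv 3\pmod 4$ (the paper's Lemma~3.1), gives the upper bound. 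This fits the paper well, since it reuses the conductor computation of Section~2 and Lemma~3.1 rather than importing Mazur or Lutz--Nagell.

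One detail needs tightening. Your concrete recipe $p_1\equiv 19\pmod{24}$, $p_2\equiv 11\pmod{24}$ does \emph{not} by itself force $\gcd(p_1+1,p_2+1)=4$: for instance $p_1=19$ and $p_2=59$ give $\gcd(20,60)=20$, since both $p_i+1$ can share an odd prime larger than $3$. You acknowledge this (``modulo higher prime issues''), but the fix should be stated: either argue prime-by-prime --- for each odd $\ell$ dividing the (finite) torsion order, Dirichlet provides a good prime $p\equiv 3\pmod 4$ with $p\equiv 1\pmod{\ell}$, so $\ell\nmid p+1$, a contradiction; and a single $p\equiv 3\pmod 8$ caps the $2$-part at $4$ --- or choose $p_2$ \emph{after} $p_1$, imposing $p_2\equiv 1\pmod{\ell}$ for each odd prime $\ell\mid p_1+1$ via CRT and Dirichlet, which does yield $\gcd(p_1+1,p_2+1)=4$. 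With that repair the argument is complete and correct.
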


In fact, one can prove the points $\varphi ^{-1}(0,0)$, $\varphi ^{-1}(n,0)$, $\varphi ^{-1}(-n,0)$ lie in $\mathbb{C}/\Lambda$ where $\varphi ^{-1}$ is introduced in Proposition 4.1, these respectively are $(1-i) \frac{\pi}{\sqrt{n}} G \mod \Lambda$, $\frac{1}{2} \frac{\pi}{\sqrt{n}} G \mod \Lambda$, $-\frac{i}{2} \frac{\pi}{\sqrt{n}} G \mod \Lambda$. Now we define $S_n$ as \{$0 \mod \Lambda ,(1-i)\frac{\pi}{\sqrt{n}} G \mod \Lambda$, $\frac{1}{2} \frac{\pi}{\sqrt{n}} G \mod \Lambda$, $-\frac{i}{2} \frac{\pi}{\sqrt{n}} G \mod \Lambda$ \}

\begin{thm}
	If $\epsilon = -1$, our calculation result $U = \Phi(\omega_1) + \ldots + \Phi(\omega_n)$ is not in $S_n$, then we can conclude that $n$ is a congruent number.
\end{thm}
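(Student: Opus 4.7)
The plan is to chain together three results already established in the paper: Proposition~6.2, Proposition~6.3, and the basic characterization of congruent numbers recalled in Section~1.1. First I would apply Proposition~6.2 with $\epsilon = -1$ to conclude that the uniformization $\varphi$ of Proposition~4.1 sends $U = \Phi(\omega_1) + \cdots + \Phi(\omega_h)$ to a \emph{rational} point $P := (\wp(U), \wp'(U)) \in E_{(n)}(\mathbb{Q})$. This Gross--Zagier input is the only transcendental ingredient and is precisely what the Heegner-point construction buys.

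Next I would identify $S_n$ as the preimage under $\varphi$ of the entire torsion subgroup $E_{(n)}(\mathbb{Q})_{\mathrm{tors}}$. By Proposition~6.3 this subgroup equals $\{\infty, (0,0), (n,0), (-n,0)\}$, and the explicit computation recorded just after Proposition~6.3 identifies the four elements of $S_n$ as exactly $\varphi^{-1}(\infty), \varphi^{-1}(0,0), \varphi^{-1}(n,0), \varphi^{-1}(-n,0)$ inside $\mathbb{C}/\Lambda$. Consequently, $\varphi(U)$ is a torsion point of $E_{(n)}(\mathbb{Q})$ if and only if $U \in S_n$.

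The conclusion is then immediate. Since by hypothesis $U \notin S_n$, the point $P = \varphi(U)$ lies in $E_{(n)}(\mathbb{Q})$ but is not torsion, so $E_{(n)}(\mathbb{Q})$ contains an element of infinite order and is therefore infinite. By the equivalence recalled in Section~1.1 between infiniteness of $E_{(n)}(\mathbb{Q})$ and $n$ being a congruent number, the conclusion follows.

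The theorem itself is thus a short syllogism on top of the heavy machinery already quoted. The main obstacle, in any concrete instance, is not this logical chain but rather the rigorous certification that the numerically computed $U$ lies outside $S_n$: because $U$ is a transcendental sum produced from a $q$-expansion of $\Phi$ evaluated at Heegner points, a fully rigorous application needs an effective error bound guaranteeing that the approximate value of $U$ modulo $\Lambda$ is bounded away from each of the four explicit torsion classes $0, (1-i)\frac{\pi}{\sqrt{n}}G, \frac{1}{2}\frac{\pi}{\sqrt{n}}G, -\frac{i}{2}\frac{\pi}{\sqrt{n}}G$. Supplying such an error estimate, rather than the abstract implication, is the work needed to turn the theorem into a practical congruent-number test.
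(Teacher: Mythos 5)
Your proposal is correct and follows essentially the same route as the paper's own (much terser) proof: the paper simply asserts that $U \notin S_n$ makes $\varphi(U)$ a non-torsion point and invokes the equivalence between $n$ being congruent and $E_{(n)}(\mathbb{Q})$ having a non-trivial point, while you spell out the same chain explicitly via Proposition~6.2 (rationality of $(\wp(U),\wp'(U))$), Proposition~6.3 (identification of $S_n$ with the preimage of the torsion subgroup), and the characterization from Section~1.1. Your closing caveat about needing a rigorous error bound to certify numerically that $U \notin S_n$ is a fair observation about the method's practical use, but it is not part of the paper's argument for this statement.
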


\begin{proof}
    This is evident because \( U \) in this case is a non-torsion point, i.e., a non-trivial point. By the knowledge of congruent numbers, we know that \( n \) is a congruent number if and only if the equation \( y^2 = x^3 - n^2 x \) has a non-trivial point.
\end{proof}

Because $\frac{1}{2}\frac{\pi}{\sqrt{5}}$ which is about $0.5863098932314025360654776255\mod \Lambda$ is not in $S_5$, then we can conclude that $5$ is a congruent number.
Next, we calculate the Pythagorean triple (a,b,c)
\[
(\wp(U), \wp'(U)) \approx (11.67361, -36.048179629629)
\]
\[
(\wp(U), \wp'(U))=\left(\frac{1050625}{90000}, \frac{62279}{1728}\right)
\]

Consequently,$\left(\frac{1050625}{90000}, \frac{62279}{1728}\right)$ is a nontrivial solution of  $E_{(5)}: y^2 = x^3 - n^2x$.
\begin{thm}(Knapp, \cite{Knapp1992})
	Let $E$ be an elliptic curve over a field of characteristic
	not equal to 2 or 3. Suppose $E$ is given by
	$$y^{2}=(x-\alpha)(x-\beta)(x-\gamma)=x^{3}+rx^{2}+sx+t$$
	with $\alpha,\beta,\gamma$ in $K.$ For $(x_2,y_2)$ in $E(K)$, there exists $(x_1,y_1)$ in $E(K)$ with $2(x_1,y_1)=(x_2,y_2)$ if and only if $x_2-\alpha,x_2-\beta$ and $x_2-\gamma$ are squares
	$in k.$	
\end{thm}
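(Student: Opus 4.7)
The plan is to reduce everything to a single algebraic identity comparing the duplication law on $E$ with the tangent-line construction. Writing $f(x)=(x-\alpha)(x-\beta)(x-\gamma)$, for a non-$2$-torsion point $(x_1,y_1)\in E(K)$ the tangent line $y=\lambda(x-x_1)+y_1$ with $\lambda=f'(x_1)/(2y_1)$ meets $E$ at $(x_1,y_1)$ with multiplicity two and at $(x_2,-y_2)$, where $(x_2,y_2)=2(x_1,y_1)$. Equivalently, one has the polynomial identity
$$f(x)-\bigl(\lambda(x-x_1)+y_1\bigr)^{2}=-(x-x_1)^{2}(x-x_2),$$
which will serve as the master tool in both directions.

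For the forward implication, I would substitute $x=\alpha,\beta,\gamma$ into this identity. At $x=\alpha$ the right-hand side becomes $(\alpha-x_1)^{2}(x_2-\alpha)$ while the left-hand side becomes $\bigl(\lambda(\alpha-x_1)+y_1\bigr)^{2}$, so
$$x_2-\alpha=\left(\frac{\lambda(\alpha-x_1)+y_1}{\alpha-x_1}\right)^{2}\in (K^{\times})^{2},$$
and similarly at $\beta$ and $\gamma$. The only degenerate case occurs when $x_1\in\{\alpha,\beta,\gamma\}$, which forces $(x_1,y_1)$ to be $2$-torsion, so $2(x_1,y_1)=O$; this is incompatible with $(x_2,y_2)$ being an affine point.

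For the converse, write $x_2-\alpha=u^{2}$, $x_2-\beta=v^{2}$, $x_2-\gamma=w^{2}$ with $u,v,w\in K$; then $y_2^{2}=u^{2}v^{2}w^{2}$, so $y_2=\varepsilon uvw$ for some $\varepsilon\in\{\pm1\}$. I would invert the forward construction by selecting signs $\varepsilon_\alpha,\varepsilon_\beta,\varepsilon_\gamma\in\{\pm1\}$ with product $\varepsilon$ and solving for $(x_1,\lambda,y_1)\in K^{3}$ the three equations
$$\lambda(\alpha-x_1)+y_1=\varepsilon_\alpha(\alpha-x_1)u,\quad \lambda(\beta-x_1)+y_1=\varepsilon_\beta(\beta-x_1)v,\quad \lambda(\gamma-x_1)+y_1=\varepsilon_\gamma(\gamma-x_1)w.$$
Subtracting pairs eliminates $y_1$ and produces a linear system in $x_1$ and $\lambda$ with coefficients in $K$; solving yields $x_1,\lambda,y_1\in K$, and substituting back into the master identity confirms that $(x_1,y_1)\in E(K)$ with $2(x_1,y_1)=(x_2,y_2)$.

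The main obstacle is this converse. One must verify that a compatible sign triple $(\varepsilon_\alpha,\varepsilon_\beta,\varepsilon_\gamma)$ yields a consistent system, rather than forcing $x_1$ or $y_1$ into a proper quadratic extension of $K$; this reflects the fact that $(x_2,y_2)$ has four geometric halvings in $E(\bar K)$, generally Galois-permuted, and the hypothesis that $x_2-\alpha, x_2-\beta, x_2-\gamma$ are simultaneously squares in $K$ is precisely what trivialises the Galois action on these halvings and produces a $K$-rational preimage. The boundary case $y_2=0$, in which $(x_2,y_2)$ is itself $2$-torsion and one of $u,v,w$ vanishes, requires a separate degeneration argument.
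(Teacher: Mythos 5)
You should first be aware that the paper does not prove this statement at all: it is quoted verbatim from Knapp \cite{Knapp1992} (his Theorem 4.2) and used as a black box to halve the point found on $E_{(5)}$. So there is no in-paper proof to compare against; I can only assess your argument on its own merits. Your tangent-line/duplication approach is the standard one, and the forward direction is essentially right, but note a sign error in your master identity: both $f(x)$ and $(x-x_1)^2(x-x_2)$ are monic cubics, so the correct identity is $f(x)-\bigl(\lambda(x-x_1)+y_1\bigr)^2=(x-x_1)^2(x-x_2)$, without the minus sign. As written your identity is false (the two sides have opposite leading coefficients); you then compensate by dropping the minus sign when evaluating the left side at $x=\alpha$ (where $f(\alpha)=0$ leaves $-(\lambda(\alpha-x_1)+y_1)^2$, not $+(\cdots)^2$), and the two slips cancel to give the correct conclusion $x_2-\alpha=\bigl((\lambda(\alpha-x_1)+y_1)/(\alpha-x_1)\bigr)^2$. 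This is fixable but should be fixed.

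The genuine gap is the converse, and you have essentially flagged it yourself. Setting up the three equations $\lambda(\alpha-x_1)+y_1=\varepsilon_\alpha(\alpha-x_1)u$, etc., and solving the pairwise differences as a linear system in $(\lambda,x_1)$ is fine as far as it goes, but the crux is the claim that ``substituting back into the master identity confirms that $(x_1,y_1)\in E(K)$ with $2(x_1,y_1)=(x_2,y_2)$.'' That is precisely what has to be proved, not asserted: you must show that for a suitable choice of signs the resulting triple satisfies $y_1^2=f(x_1)$ and that the cubic $f(x)-(\lambda(x-x_1)+y_1)^2$ actually acquires a \emph{double} root at $x_1$ with third root $x_2$ --- a line through three prescribed values at $x=\alpha,\beta,\gamma$ is not automatically tangent to $E$. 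This verification (equivalently, the fact that one of the four sign classes yields a $K$-rational halving, which is the group-theoretic statement that $(x,y)\mapsto(x-\alpha,x-\beta,x-\gamma)$ induces an injection $E(K)/2E(K)\hookrightarrow(K^\times/(K^\times)^2)^3$) is the entire content of the theorem in this direction, and it is missing. The degenerate cases you mention (the $y_2=0$ boundary, and the possible singularity of the $2\times2$ linear system for certain sign choices) are likewise deferred rather than handled. As it stands this is a correct plan with the decisive computation omitted, not a complete proof.
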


According to Theorem 6.2,  we have $2(\frac{1050625}{90000}, \frac{62279}{1728})=(\frac{11183412793921}{2234116132416}, \frac{-1791076534232245919}{3339324446657665536}),$
\[
\begin{aligned}
	&a=(x+n)^{1/2}+(x-n)^{1/2}, \\
	&b=(x+n)^{1/2}-(x-n)^{1/2}, \\
	&c=2x^{1/2}.
\end{aligned}
\]

After computation, we have
	\begin{equation*}
a = \frac{4920}{1519} ,
	b = \frac{1519}{492},
	c = \frac{3344161}{747348}.
\end{equation*}

\vskip 8mm

\section{Proof of The Congruent Number 13}
\vskip 3mm
Although we already know that primes \( p \) congruent to 5 modulo 8 are congruent numbers, we will prove that 13 is congruent number by using a constructive proof as introduced earlier.

Based on the properties of the elliptic curve \( y^2 = x^3 - n^2 x \), we have the following proposition.

\begin{prop}
	The conductor of the curve \( y^2 = x^3 - 13^2 x \) is 5408, and its torsion structure is \( \mathbb{Z}/2\mathbb{Z} \oplus \mathbb{Z}/2\mathbb{Z} \).	The modular form is given by
\begin{equation*}
\begin{aligned}
	&q + 2 q^5 - 3 q^9 + 2 q^{17} - q^{25} - 10 q^{29} + 2 q^{37} - 10 q^{41} - 6 q^{45} - 7 q^{49} +\\
 &+14 q^{53} - 10 q^{61} + 6 q^{73} + 9 q^{81} + 4 q^{85} - 10 q^{89} - 18 q^{97}.
	\end{aligned}
\end{equation*}
	
	As an elliptic curve over \( \mathbb{C} \), it is isomorphic to \( \mathbb{C}/\left(\frac{\pi}{\sqrt{13}} G \mathbb{Z}[i]\right) \).
\end{prop}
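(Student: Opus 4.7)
The plan is to obtain each of the four assertions by specializing the general machinery of Sections 2--6 to $n=13$, so that only the explicit Fourier expansion requires any genuinely new computation.

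\textbf{Conductor.} The formula $N_{E_{(n)}} = 2^{5} n^{2}$ is established in Section 2, so setting $n=13$ gives $N_{E_{(13)}} = 32 \cdot 169 = 5408$. \textbf{Torsion.} Proposition 6.1 asserts that the torsion subgroup of $E_{(n)}(\mathbb{Q})$ equals $\mathbb{Z}/2\mathbb{Z}\oplus\mathbb{Z}/2\mathbb{Z}$ generated by $(0,0)$ and $(n,0)$ for every $n$, hence also for $n=13$. \textbf{Complex lattice.} Theorem 4.1 yields $E_{(n)} \cong \mathbb{C}/\bigl(\tfrac{\pi}{\sqrt{n}} G\,\mathbb{Z}[i]\bigr)$ over $\mathbb{C}$, and the claimed isomorphism is the $n=13$ specialization.

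The substantive content is the verification of the $q$-expansion of the weight-$2$ newform $f \in S_{2}(\Gamma_{0}(5408))$ attached to $E_{(13)}$ by the modularity theorem (Proposition 5.1). By the remark after Proposition 5.1, $a_{2}=0$, and by Theorem 3.1 every coefficient $a_{m}$ with $m\equiv 3\pmod{4}$ vanishes. This eliminates most indices below $100$ and leaves only $m\equiv 1\pmod{4}$ to examine. For the prime indices $p\equiv 1\pmod{4}$ with $p\neq 13$, I would compute $a_{p}=p+1-\#E_{(13)}(\mathbb{F}_{p})$ by direct point counting on $y^{2}=x^{3}-169x$ over $\mathbb{F}_{p}$, cycling through $p\in\{5,17,29,37,41,53,61,73,89,97\}$. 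For prime powers $p^{r}$ with $p\equiv 3\pmod{4}$, Lemma 3.2 delivers $a_{p^{r}}=0$ when $r$ is odd and $(-p)^{r/2}$ when $r$ is even, giving $a_{9}=-3$, $a_{49}=-7$, and $a_{81}=9$ with no further work. For $p\equiv 1\pmod{4}$ the Hecke recursion $a_{p^{k+1}}=a_{p}a_{p^{k}}-p\,a_{p^{k-1}}$ from Section 3 handles prime powers; for instance $a_{25}=a_{5}^{2}-5$. Composite indices then fall out from multiplicativity $a_{mn}=a_{m}a_{n}$ for $\gcd(m,n)=1$, e.g.\ $a_{45}=a_{9}a_{5}$ and $a_{85}=a_{5}a_{17}$.

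The main obstacle is the direct point count for the ten primes listed above; the arithmetic is routine but must be tabulated carefully. One mild subtlety is that the modularity theorem combined with the conductor computation in Step 1 automatically places $f$ at exact level $5408$, so no separate oldform/newform analysis is required: once the ten values $a_{5},\ldots,a_{97}$ are determined, every remaining coefficient up to $q^{97}$ is forced by Theorem 3.1, Lemma 3.2, the Hecke recursion, and multiplicativity, after which a term-by-term comparison with the stated expansion completes the proposition.
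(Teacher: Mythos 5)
Your proposal is correct, and it is essentially the derivation the paper intends: the paper states this proposition without any written proof, and the only reasonable justification is exactly what you do, namely specializing the Section 2 conductor formula $N_{E_{(n)}}=2^{5}n^{2}$, Proposition 6.1 (torsion), and Theorem 4.1 (the lattice $\frac{\pi}{\sqrt{n}}G\,\mathbb{Z}[i]$) to $n=13$, and assembling the $q$-expansion from point counts at the primes $p\equiv 1\pmod 4$ together with the vanishing and recursion results of Section 3 (note that the vanishing of $a_{m}$ for $m\equiv 3\pmod 4$ is Theorem 3.2, not 3.1). Your coefficient checks are consistent (e.g.\ $a_{5}=2$, $a_{13}=0$ since $13\mid N$, $a_{25}=a_{5}^{2}-5=-1$, $a_{45}=a_{9}a_{5}=-6$, $a_{85}=a_{5}a_{17}=4$), so your write-up in fact supplies more detail than the paper does.
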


\begin{thm}
	13 is a congruent number.
\end{thm}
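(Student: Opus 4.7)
The plan is to reproduce, for $n = 13$, the Heegner-point computation carried out for $n = 5$ in Section 6, leveraging the ingredients assembled in Proposition 7.1: the conductor $N = 5408$, the uniformizing lattice $\Lambda = \frac{\pi}{\sqrt{13}} G \mathbb{Z}[i]$, and the explicit $q$-expansion of the associated newform $f$. The engine is Theorem 6.1: it suffices to select a negative discriminant $D$ satisfying the Heegner hypothesis modulo $4N$ for which the functional-equation sign is $\epsilon = -1$, to list all Heegner points $\omega_1, \ldots, \omega_h$ on $X_0(N)$ sharing this $D$ and the associated residue $r$, and to verify that
\[
U \;=\; \Phi(\omega_1) + \cdots + \Phi(\omega_h) \pmod{\Lambda}
\]
does not belong to the four-element torsion set $S_{13}$ introduced in Section 6. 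As soon as $U \notin S_{13}$ is established, Theorem 6.1 delivers $(\wp(U), \wp'(U))$ as a non-torsion rational point on $E_{(13)}$, and combined with the criterion recalled in Section 1 this forces $13$ to be a congruent number.

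The concrete steps I would carry out, in order, are the following. First, pick a small negative fundamental discriminant $D$ such that each prime divisor of $N = 2^5 \cdot 13^2$ satisfies the Heegner splitting condition and such that $D \equiv r^2 \pmod{4N}$ admits a solution $r$, while keeping $h(D)$ small enough that the subsequent sum is tractable. Second, enumerate the $\Gamma_0(N)$-equivalence classes of coprime triples $(A_i, B_i, C_i)$ with $N \mid A_i$, $B_i \equiv r \pmod{2N}$ and $B_i^2 - 4A_iC_i = D$; each yields $\omega_i = (-B_i + \sqrt{D})/(2A_i)$ in the upper half-plane, and the list has length $h = h(D)$. Third, numerically evaluate
\[
\Phi(\omega_i) \;=\; \sum_{m \ge 1} \frac{a_m}{m}\, e^{2\pi i m \omega_i}
\]
by truncating the series at a sufficiently large index, using the Fourier coefficients $a_m$ from Proposition 7.1 and extended through multiplicativity and the vanishing $a_m = 0$ for $m \equiv 3 \pmod{4}$ established in Theorem 3.2. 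Fourth, form $U$, reduce modulo $\Lambda$, and compare it with each of the four elements of $S_{13}$, all computable to arbitrary precision from $G \approx 0.8346268\ldots$.

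I expect the main obstacle to be twofold. One face is arithmetic: confirming that the chosen $D$ really yields $\epsilon = -1$ rather than $+1$ for the twist governing $E_{(13)}$, which requires tracking the local root numbers at the primes $2$ and $13$ dividing $N$; an incorrect choice of $D$ would push $U$ into the Galois-conjugate component of $E(K) \setminus E(\mathbb{Q})$ and invalidate the conclusion of Theorem 6.1. The other face is numerical: since $S_{13}$ contains four distinct classes in $\mathbb{C}/\Lambda$, the approximation of $U$ must be sharp enough to reliably separate it from the nearest torsion representative modulo every small lattice translate. Once both are under control, Theorem 6.1 concludes that $13$ is a congruent number, and if one wishes to exhibit the associated rational right triangle, it can be extracted from $(\wp(U), \wp'(U))$ via the doubling recipe of Theorem 6.2, exactly as was done for $n = 5$.
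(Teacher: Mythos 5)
Your proposal follows essentially the same route as the paper: the paper takes $D=-55$ with $h(D)=4$, lists the four Heegner points $z_i$ on $X_0(5408)$, computes $U=\Phi(z_1)+\cdots+\Phi(z_4)\approx -2.3665268305$ modulo $\Lambda$, observes $U\notin S_{13}$, and invokes Theorem 6.1 exactly as you outline. Your additional remarks on verifying $\epsilon=-1$ and on the numerical precision needed to separate $U$ from the torsion set are points the paper passes over silently, but they do not change the method.
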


\begin{proof}
	Let \( D = -55 \), and we know that \( h(D) = 4 \) based on the following:
	
	By factorizing \( 21632 \), we get \( 21632 = 2^7 \times 13^2 \). We need to solve \( x^2 \equiv -55 \) modulo \( 2^7 \) and modulo \( 13^2 \).
	
	1. For the modulo \( 2^7 = 128 \):
	\[
	x^2 \equiv -55 \mod{128}
	\]
	We find that \( x = 29 \) is a solution.
	
	2. For the modulo \( 13^2 = 169 \):
	\[
	x^2 \equiv -55 \mod{169}
	\]
	After calculations, we find that \( x = 72 \) is a solution.
	
	We select the following:
	\[
	z_1 = \frac{\sqrt{-55}}{10816} + \frac{8547}{10816},
	\]
	\[
	z_2 = \frac{\sqrt{-55}}{10816} + \frac{5987}{10816},
	\]
	\[
	z_3 = \frac{\sqrt{-55}}{10816} + \frac{4829}{10816},
	\]
	\[
	z_4 = \frac{\sqrt{-55}}{10816} + \frac{2269}{10816}.
	\]
	
	The value \( U = \Phi(z_1) + \Phi(z_2) + \Phi(z_3) + \Phi(z_4) \approx -2.3665268305 + 4.23177966 E-37\cdot I \mod \Lambda \) is not in $S_{13}$.
\end{proof}

In fact, the rational point we have computed, via the Weierstrass map, is:
\[
(\wp(U), \wp'(U)) \approx  (30.4381658050760, 151.843211275078),
\]
\[
(\wp(U), \wp'(U)) = \left(\frac{11432100241}{375584400}, \frac{1105240264347961}{7278825672000}\right).
\]

Therefore, \( \left( \frac{11432100241}{375584400}, \frac{1105240264347961}{7278825672000} \right) \) is a non-trivial solution of the curve \( E: y^2 = x^3 - 13^2 x \).

\vskip 8mm

\noindent{\bf Acknowledgement}.
This work is supported in part by Shaanxi Fundamental Science Research Project for Mathematics and Physics
(Grant No. 23JSY033) and BJNSF 1242013.

\vskip 8mm

\end{document}